\documentclass{amsart}
\usepackage[T1]{fontenc}
\newtheorem{theorem}[subsection]{Theorem}

\title{Milnor number of an invariant singularity: generalization of Chulkov's inequality}
\author{Ivan Proskurnin}
\date{}

\begin{document}
\maketitle

\begin{abstract}

We prove a lower bound for the Milnor number of function germ invariant with respect to a finite abelian group action. It is shown that this bound is tight for functions of arbitrarily many variables. We also prove the function germs that reach this lower bound are equivariantly stable, i.e. invariant analogues of Morse singularities.
\end{abstract}

\section{Introduction}

In \cite{1}, M. E. Kazarian has posed a following conjecture\footnote{This is essentially a local version of the Matsumura-Monsky theorem(\cite{5}). Actually the proof by Matsumura and Monsky can be easily adapted to this statement. However, Kazarian and others did not know that, and this is a good thing, as otherwise we would not have this interesting problem on Milnor numbers to wrestle with.}:

\begin{theorem}
Consider a germ of function $f:(\mathbb{C}^n,0) \longrightarrow (\mathbb{C},0)$ with an isolated singularity at the origin and a compact Lie group $G$ of analytic diffeomorphisms $(\mathbb{C}^n,0) \longrightarrow (\mathbb{C}^n,0)$ such that $f(g(x)) = f(x) \; \forall \;g \in G \; \forall \;x \in \mathbb{C}^n$. If the Taylor series of $f$ contains no monomials of degree $\leq 2$ then $G$ is discrete.
\end{theorem}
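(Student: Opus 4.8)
The goal is to show $\dim G = 0$, which for a compact Lie group means $G$ is finite, hence discrete; I argue by contradiction, supposing $\dim G \ge 1$. The first step is to reduce to a \emph{linear} action. Since $G$ is compact and fixes the origin, Bochner's linearization theorem furnishes a germ of biholomorphism $\phi$ fixing $0$ with invertible differential that simultaneously conjugates every $g \in G$ to its linear part $dg_0$; after replacing $f$ by $f \circ \phi$ we may assume $G \subseteq GL(n,\mathbb{C})$ acts linearly while still fixing $f$. I then check that the hypotheses survive: the isolated-singularity property is preserved by any biholomorphic coordinate change, and since $\phi$ has invertible differential it carries the lowest homogeneous part of $f$ to a nonzero form of the same degree, so ``no monomials of degree $\le 2$'' persists and $f$ still has order $\ge 3$.

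Next I differentiate the invariance infinitesimally. As $\dim G \ge 1$, the Lie algebra $\mathrm{Lie}(G)$ is a nonzero subspace of $\mathrm{End}(\mathbb{C}^n)$; I pick $A \ne 0$ in it. The one-parameter subgroup $\exp(tA) \subseteq G$ fixes $f$, so differentiating $f(\exp(tA)x) = f(x)$ at $t = 0$ gives $\sum_i (Ax)_i\,\partial f/\partial x_i = 0$. Writing $L_i = (Ax)_i = \sum_j A_{ij}x_j$, these are linear forms, not all zero because $A \ne 0$, and $\sum_i L_i\,\partial_i f = 0$ is a syzygy among the partial derivatives with linear coefficients.

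To conclude, I use that an isolated singularity makes the partials a regular sequence: the Jacobian ideal $J_f = (\partial_1 f,\dots,\partial_n f)$ has finite colength in the local ring $\mathcal{O}$, so $\partial_1 f,\dots,\partial_n f$ is a system of parameters and hence a regular sequence, $\mathcal{O}$ being Cohen--Macaulay. For a regular sequence the Koszul complex is a resolution, so the module of first syzygies is generated by the Koszul relations $(\partial_i f)e_j - (\partial_j f)e_i$; consequently every component of any syzygy lies in $J_f$, and in particular each $L_i \in J_f$. But $f$ has order $\ge 3$, so every $\partial_i f$ has order $\ge 2$ and $J_f \subseteq \mathcal{M}^2$, where $\mathcal{M}$ is the maximal ideal; a nonzero linear form cannot lie in $\mathcal{M}^2$. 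Hence all $L_i = 0$, which forces $A = 0$, a contradiction; therefore $\dim G = 0$.

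I expect the linearization to be the main obstacle and the conceptual heart of the argument. Without it the infinitesimal generator is merely a vector field $v = \sum_i v_i\,\partial_i$ with each $v_i$ vanishing at the origin, and the syzygy argument then shows only that $v_i \in J_f \subseteq \mathcal{M}^2$, i.e.\ that the linear part of $v$ vanishes --- far weaker than $v = 0$, since it does not rule out something like $v = x_1^2\,\partial/\partial x_1$. It is precisely compactness, through Bochner's theorem, that makes the generator genuinely linear and lets the degree comparison with the zero $2$-jet hypothesis annihilate $A$ in one step; the remaining input, that the first syzygies of a regular sequence are Koszul, is standard.
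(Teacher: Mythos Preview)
Your proof is correct and is essentially the Matsumura--Monsky argument that the paper alludes to in its opening footnote; the paper itself takes a genuinely different route. The paper deduces Theorem~1.1 from Chulkov's inequality (Theorem~1.2): if $G$ were not discrete it would contain cyclic subgroups of every prime order $p$ acting nontrivially, whence $\mu(f)\ge p-1$ for all primes $p$, contradicting finiteness of the Milnor number. Your argument instead linearizes via Bochner, extracts a nonzero infinitesimal generator $A$, produces a syzygy $\sum_i (Ax)_i\,\partial_i f=0$ with linear coefficients, and kills it using that an isolated singularity makes the partials a regular sequence, so every first syzygy is Koszul and has all components in $J_f\subset\mathcal{M}^2$. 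This is more direct and needs only elementary commutative algebra---no equivariant Milnor numbers, no invariant morsifications. The paper's detour, by contrast, is the whole point of the paper: its real content is the quantitative bound $\mu(f)\ge l(\tau)-1$ for \emph{finite} abelian actions, and Theorem~1.1 drops out only as the limiting qualitative statement. So your approach is cleaner for the bare discreteness claim, while the paper's approach buys the Milnor-number inequality that your syzygy argument cannot see.
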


V.A. Vassiliev has given an elementary proof of this statement based on the geometry of the Newton polytope of a germ with an infinite group of symmetries in \cite{2}. Some time later, S.P. Chulkov has given a proof (\cite{3}) based on the following inequality for the Milnor number of $f$:

\begin{theorem}
Consider a germ of function $f:(\mathbb{C}^n,0) \longrightarrow (\mathbb{C},0)$ with an isolated singularity at the origin such that the Taylor series of $f$ contains no monomials of degree $\leq 2$. If f is invariant with respect to a nontrivial action of a group of prime order $p$ then $\mu(f) \geq p-1$.
\end{theorem}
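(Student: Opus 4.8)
My plan is to translate the inequality into a statement about how the cyclic group acts on the vanishing cohomology of $f$, and to exploit the fact that the nontrivial irreducible \emph{rational} representation of $\mathbb{Z}_p$ has dimension exactly $p-1$. First I would linearise the action: since $\mathbb{Z}_p$ is finite, an averaging (Bochner) argument lets me assume the generator $g$ acts on $(\mathbb{C}^n,0)$ by a linear map $A$ with $A^p=\mathrm{id}$ and $A\neq\mathrm{id}$. Diagonalising $A$ gives a weight decomposition $\mathbb{C}^n=\bigoplus_{k=0}^{p-1}W_k$, where $W_k$ is the $\zeta^k$-eigenspace for $\zeta=e^{2\pi i/p}$ and $W_0=\mathrm{Fix}(g)$; invariance of $f$ means only monomials of total weight $0 \pmod p$ occur, and the hypothesis says none of these has degree $\le 2$. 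Recall that the Milnor number equals the rank of the reduced homology $\widetilde H_{n-1}(F)$ of the Milnor fibre $F$, and that $g$ acts on $F$ and hence on $\widetilde H_{n-1}(F;\mathbb{Q})\cong\mathbb{Q}^{\mu}$.

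The representation-theoretic heart of the argument is as follows. Since $g^p=\mathrm{id}$, the space $\widetilde H_{n-1}(F;\mathbb{Q})$ is a module over $\mathbb{Q}[\mathbb{Z}_p]\cong\mathbb{Q}\times\mathbb{Q}(\zeta)$, so it splits as $\mathbb{Q}^{\mu_0}\oplus\mathbb{Q}(\zeta)^{c}$ for some $\mu_0,c\ge 0$; since $[\mathbb{Q}(\zeta):\mathbb{Q}]=p-1$ this yields $\mu=\mu_0+(p-1)c$. Consequently the desired inequality $\mu\ge p-1$ is equivalent to the single assertion $c\ge 1$, i.e. that \emph{$g$ acts nontrivially on the vanishing cohomology}. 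In other words I must show that the nontrivial irreducible $\mathbb{Q}[\mathbb{Z}_p]$-module occurs at least once among the vanishing cycles; the number $p-1$ in the statement is precisely its dimension.

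To prove $c\ge 1$ I would argue by contradiction, assuming the action on $\widetilde H_{n-1}(F;\mathbb{Q})$ is trivial, and combine two inputs. The first is the Lefschetz fixed-point formula for the finite-order map $g$: its Lefschetz number equals $\chi(F^{g})$, where the fixed fibre $F^{g}=F\cap W_0$ is exactly the Milnor fibre of the restriction $f|_{W_0}$. In the basic case $W_0=0$ (the action is free away from the origin) one has $F^{g}=\emptyset$, so the Lefschetz number is $0$; triviality of the action would then force $1+(-1)^{n-1}\mu=0$, hence $\mu=1$, meaning $f$ is a Morse singularity and therefore has nonzero $2$-jet --- contradicting the hypothesis. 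The second input, which handles the general fixed locus and is where M. Roberts' theorem does the real work, is to pass to a $G$-invariant morsification of $f$: its critical points fall into $G$-orbits of size $1$ (lying on $W_0$) and size $p$, and the $\mu$ vanishing cycles are distributed accordingly. The degeneracy of $f$ transverse to $W_0$, guaranteed by the vanishing $2$-jet, should prevent the fixed locus from absorbing the whole Milnor number, so at least one free orbit --- equivalently one copy of the regular summand --- must appear, giving $c\ge1$.

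The step I expect to be the main obstacle is exactly the exclusion of $c=0$ when $W_0\neq 0$: here $F^g$ is the Milnor fibre of the possibly non-isolated germ $f|_{W_0}$, and ruling out the coincidence $\chi(F^g)=\chi(F)$ that triviality would demand requires genuine control of the equivariant geometry near the fixed locus. This is where I would lean on Roberts' invariant-morsification results to show that the non-fixed directions necessarily contribute a nontrivial $\mathbb{Z}_p$-representation, after which the decomposition $\mu=\mu_0+(p-1)c$ with $c\ge1$ finishes the proof.
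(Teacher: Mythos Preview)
Your approach via the $\mathbb{Q}[\mathbb{Z}_p]$-module structure on the vanishing cohomology is genuinely different from the paper's, and the free case $W_0=0$ is clean: Lefschetz on the Milnor fibre gives $\chi(F^g)=0$, which under the assumption $c=0$ forces $1+(-1)^{n-1}\mu=0$, hence $\mu=1$ (or an outright contradiction), against the vanishing $2$-jet. So $c\ge 1$ and $\mu=\mu_0+(p-1)c\ge p-1$, with no morsification needed at all.

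The gap is in the non-free case, and it is precisely the obstacle the paper is built to circumvent. Roberts' invariant-morsification theorem applies only to \emph{real} actions, i.e.\ those admitting an invariant nondegenerate quadratic form. A diagonal $\mathbb{Z}_p$-action is real only when its nontrivial characters occur in conjugate pairs $\chi,\bar\chi$; for odd $p$ this fails generically, so you cannot in general produce a $G$-invariant morsification of $f$, and your ``second input'' collapses. The paper's proof of its main theorem (of which the statement at hand is the case $G=\mathbb{Z}_p$, $l(\tau)=p$) handles exactly this by doubling: it replaces $f$ by $f\oplus f$ on $\mathbb{C}^{2n}$ under the action $\tau_{\mathbb{R}}$ that pairs each $\chi_i$ with $\bar\chi_i$, which is real by construction, then applies Roberts' inequality there together with $\nu(f\oplus f)\ge\mu(f)$ to get $\mu(f)^2\ge(\mu(f)-1)\,l(\tau)+1$, and solves the quadratic.

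Your argument can be salvaged---and in fact made independent of Roberts entirely---by first performing the reduction the paper proves in Section~2 (that one may assume the action has no nonzero fixed points): add $\sum_{i\le k}x_i^2$ in the fixed coordinates, which only lowers $\mu$, and pass via the equivariant splitting lemma to the restriction of $f$ to the non-fixed subspace, which still has vanishing $2$-jet and carries a free $\mathbb{Z}_p$-action away from $0$. After that reduction your Lefschetz computation finishes the proof. This yields a short alternative in the prime case, trading the doubling trick for the arithmetic fact $[\mathbb{Q}(\zeta_p):\mathbb{Q}]=p-1$; note, though, that this arithmetic input is special to prime order and does not by itself deliver the paper's bound $\mu\ge l(\tau)-1$ for arbitrary abelian $G$.
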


Kazarian's conjecture follows from this easily, as an infinite compact Lie group contains cyclic subgroups of any order. 

A slightly stronger inequality ($\mu(f) \geq p+1$) can be easily proven in case of a real group action using invariant morsifications as described in \cite{4}. These two inequalities are essentially the only known bounds on the Milnor number in terms of properties of the group action, so it is interesting to obtain a bound for the Milnor number in case of groups of composite order. Chulkov's original proof does not generalize (as far as we know) to groups other that $\mathbb{Z}_p$.

The main theorem of this paper is the generalization of Chulkov's inequality to arbitrary abelian groups. For a group action $\tau$ on $(\mathbb{C}^n,0)$ we will denote the length of shortest orbit of a non-fixed point by $l(\tau)$.
\begin{theorem}
Let $f$ be a germ of analytic function $(\mathbb{C}^n,0) \longrightarrow (\mathbb{C},0)$ with an isolated singularity at the origin such that the Taylor series of $f$ contains no monomials of degree $\leq 2$. Let $\tau$ be a nontrivial action of a finite abelian group $G$ on $(\mathbb{C}^n,0)$ by analytic diffeomorphisms. If $f$ is invariant with respect to $\tau$ then $\mu(f) \geq l(\tau)-1$.
\end{theorem}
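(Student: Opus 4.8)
My plan is to work throughout with the Milnor algebra $Q_f=\mathcal{O}_n/J_f$, where $\mathcal{O}_n$ is the ring of germs at the origin and $J_f=(\partial_1 f,\dots,\partial_n f)$, regarded as a representation of $G$ of dimension $\mu(f)$. First I would linearize the action by Bochner's theorem and, since $G$ is finite abelian, simultaneously diagonalize it, obtaining coordinates $x_1,\dots,x_n$ on which $g\in G$ acts on the coordinate function $x_i$ through a character $\chi_i(g)$. Invariance of $f$ means every monomial of $f$ has trivial total character, and the vanishing of the $2$-jet says that no invariant monomial of degree $\le 2$ occurs. A stabilizer computation then identifies $l(\tau)$: the stabilizer of a point is the intersection of the subgroups $\ker\chi_i$ over its support, so the shortest non-fixed orbit comes from a single coordinate axis and $l(\tau)=\min\{\operatorname{ord}(\chi_i):\chi_i\neq 1\}=:m$. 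The goal becomes $\dim_{\mathbb{C}}Q_f\geq m-1$.

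The engine is a character computation. Since the singularity is isolated, $\partial_1 f,\dots,\partial_n f$ is a regular sequence, so the Koszul complex on these elements is a $G$-equivariant free resolution of $Q_f$. Differentiation shifts characters, so $\partial_i f$ carries the character $\chi_i^{-1}$; taking the alternating sum of the Koszul terms yields, for every $g\in G$ acting on $\mathbb{C}^n$ without the eigenvalue $1$, the closed formula
\[
\operatorname{tr}(g\mid Q_f)=\prod_{i=1}^n\frac{1-\chi_i(g)^{-1}}{1-\chi_i(g)}=(-1)^n\det(g),
\]
where $\det(g)$ is the determinant of $g$ on $\mathbb{C}^n$. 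I would first exploit this in the case where some cyclic subgroup $\Gamma=\langle\gamma\rangle$ with $|\Gamma|\geq m$ acts freely on $\mathbb{C}^n\setminus\{0\}$, i.e. every $\chi_i$ restricts faithfully to $\Gamma$. Then the formula computes $\operatorname{tr}$ on all of $\Gamma\setminus\{e\}$, and the orthogonality projection forces every $\Gamma$-multiplicity except that of the distinguished character $\det|_\Gamma$ to equal $(\mu-(-1)^n)/|\Gamma|$. Non-negativity of the multiplicity of $\det|_\Gamma$ (using also integrality and $\mu\geq 2$ when $n$ is even) then gives $\mu\geq|\Gamma|-1\geq m-1$. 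For $G=\mathbb{Z}_p$ this is precisely the free case and recovers Chulkov's bound, which is reassuring.

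The main obstacle is the passage from this clean free case to a general abelian $G$: the trace formula is silent on elements $g$ fixing a nonzero vector, and such elements are unavoidable once the action is not free off the origin. To handle them I would pass to the order-$m$ character $\psi=\chi_{i_0}$ realizing $l(\tau)$ and study the $\mathbb{Z}_m$-module $Q_f^{K}$ with $K=\ker\psi$, aiming at the key lemma that at most one of the powers $\psi^0,\dots,\psi^{m-1}$ is absent from $Q_f$ (true in all examples I have checked, and forced by $\det$ in the free case); each surviving power contributes at least $1$ to $\dim Q_f$, so the lemma yields $\mu\geq m-1$. Establishing it requires controlling the contribution of the non-free elements, equivalently of the positive-dimensional fixed loci $V^g$, either by an induction over the isotropy lattice obtained by restricting $f$ to fixed subspaces, or—matching the route advertised in the abstract—by replacing the algebraic count with M. Roberts' invariant-morsification machinery. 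An invariant morsification $\tilde f$ preserves the total Milnor number and distributes it over $G$-orbits of critical points as $\mu(f)=\sum_{O}|O|\,\mu_O$, where every non-fixed orbit has length $\geq l(\tau)$, while the residual contribution concentrated at the fixed origin must be shown to be at least $l(\tau)-1$ (as already happens for $f=x^p$, where the whole Milnor number stays frozen at the origin and no splitting into orbits occurs). I expect this control of the degenerate fixed contribution to be the technical heart of the argument.
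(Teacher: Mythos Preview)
Your setup is fine: linearizing, diagonalizing, and identifying $l(\tau)=\min_i\operatorname{ord}(\chi_i)$ over the nontrivial $\chi_i$ are all correct, and the Koszul trace computation in the free case is a pleasant way to recover Chulkov's bound. But both routes you propose for the general case have a genuine gap.

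For the character route, you yourself flag the problem: the trace formula is silent on elements with nonzero fixed vectors, and your ``key lemma'' (at most one power of $\psi$ is absent from $Q_f$) is offered as an empirical observation, not proved. For the morsification route, the gap is more serious than you indicate. Roberts' existence theorem for invariant Morse deformations requires the action to be \emph{real}, i.e.\ to preserve a nondegenerate quadratic form on the ambient space. A diagonal complex abelian action typically is not real (already $\mathbb{Z}_3$ acting on $\mathbb{C}$ by a primitive cube root admits no invariant nondegenerate quadratic form), so you cannot morsify $f$ $G$-equivariantly, and the orbit decomposition $\mu(f)=\sum_O|O|$ you want to invoke is simply not available. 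Your final paragraph, which tries to isolate a ``residual contribution at the fixed origin'', is therefore arguing about an object that need not exist.

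The idea you are missing, and which the paper supplies, is a doubling trick that \emph{manufactures} a real action. After reducing to the case where $\tau$ has no fixed points outside the origin, replace $f$ by $f\oplus f\colon(x,y)\mapsto f(x)+f(y)$ on $\mathbb{C}^{2n}$, with $G$ acting by $\chi_i$ on $x_i$ and by $\bar\chi_i$ on $y_i$. This action preserves the nondegenerate form $\sum_i x_i y_i$, hence is real with the same minimal orbit length $l(\tau)$, so Roberts' inequality applies and gives $\mu(f)^2=\mu(f\oplus f)\ge(\nu(f\oplus f)-1)\,l(\tau)+1$. The second key observation is that if $\{p_i\}$ is a monomial basis of $Q_f$, then each diagonal element $p_i(x)\,p_i(y)$ is $G$-invariant in $Q_{f\oplus f}$ (its weight times the conjugate weight is trivial), so $\nu(f\oplus f)\ge\mu(f)$. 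Substituting and solving the resulting quadratic in $\mu(f)$ yields $\mu(f)\ge l(\tau)-1$.
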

In the case $G=\mathbb{Z}_p$ this reduces to the original Chulkov's inequality.

\section{Definitions and auxiliary statements}

In a suitable local system of coordinates any finite group action on $(\mathbb{C}^n,0)$ can be linearized (see \cite{6}). So we wil consider all group actions to be induced by a linear representation of $G$.
Furthermore, any linear representation of a finite abelian group is diagonalazable, hence further we will consider only actions of the form $g \circ (x_1,\ldots ,x_n) \longrightarrow (\chi_1(g) x_1,\ldots , \chi_n(g) x_n)$ with $\chi_i$ being group characters of $f$.

A function $f: (\mathbb{C}^n,0) \longrightarrow (\mathbb{C},0)$ is called \textbf{equivariant} with respect to group action $\tau$ on $(\mathbb{C}^n,0)$ if there is a character $\lambda$ of $G$ such that $f(\tau(g) \circ x)=\lambda(g)  f(x) \; \forall \; x  \; \forall \; g \; \in \; G$. $\lambda$ is called the \textbf{weight} of $f$. Invariant functions are equivariant with weight 1, of course. 

For a function germ $f$ $J_f$ will denote the \textbf{Jacobian ideal} $<\frac{\partial f}{\partial x_1}, \ldots, \frac{\partial f}{\partial x_n}>$.
$Q_f$ is the Jacobian algebra $\mathbb{C}\{x_1, \ldots, x_n\} / J_f$. $Q_f$ always admits a monomial basis, i.e. the basis consisting of monomials

If $f$ is invariant with respect to an action of a group $G$, the Jacobian ideal is invariant and therefore $G$ also acts on $Q_f$ by changes of coordinates: $g \circ h(x) = h ( g^{-1} (x))$. The representation of $G$ on $Q_f$ is called \textbf{the equvariant Milnor number} of $f$ and denoted $\mu_G(f)$. $\nu(f)$ is the multiplicity of the trivial representation in $\mu_G(f)$, i.e. the dimension of the space of invariant elements of $Q_f$. This subspace of invariant elements is denoted $Q^G_f$.

A group action on $(\mathbb{C}^n, 0)$  is called \textbf{real} if it admits an invariant quadratic form of rank $n$.

The next two theorems are taken from \cite{4}.

\begin{theorem}
If $f$ is invariant with respect to a real action of a finite group $G$ then there is a $G$-invariant deformation $f_{\lambda}$ of $f$ with only nondegenerate critical points.
\end{theorem}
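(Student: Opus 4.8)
\section{Proof}

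The plan is to produce the invariant morsification by first reducing to a real orthogonal representation and then running a parametric transversality argument stratified by isotropy type. First I would exploit the hypothesis that the action is real. Averaging an arbitrary Hermitian inner product over $G$ yields a $G$-invariant positive-definite Hermitian form $h$, and by hypothesis there is a $G$-invariant nondegenerate symmetric quadratic form $q$ of rank $n$. Comparing $h$ and $q$ produces an antilinear, $G$-equivariant involution $J$ of $\mathbb{C}^n$; its fixed locus $V_{\mathbb{R}} \cong \mathbb{R}^n$ is a $G$-invariant real form, $q$ restricts to a real nondegenerate form on it, and the original action is the complexification of the orthogonal action on $V_{\mathbb{R}}$. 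Thus, after a linear change of coordinates, I may assume $G \subset O(n,\mathbb{R})$, that $q = x_1^2 + \dots + x_n^2$ is $G$-invariant, and that $V = V_{\mathbb{R}} \otimes \mathbb{C}$.

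Next I would fix the deformation space. By the Schwarz--Mather theorem the algebra of invariant germs is generated by a finite set of basic polynomial invariants $\sigma_1,\dots,\sigma_k$, so every invariant function has the form $F(\sigma_1,\dots,\sigma_k)$. I take the finite-dimensional family $f_\lambda = f + \sum_j \lambda_j p_j$, where the $p_j$ range over all invariant monomials in the $\sigma_i$ up to a degree exceeding the finite determinacy order of $f$, and where I include the distinguished invariant $q$ among the $p_j$. Every $f_\lambda$ is $G$-invariant, and the goal becomes to show that for generic $\lambda$ the germ $f_\lambda$ has only nondegenerate critical points near the origin.

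The core of the argument is a stratified equivariant transversality statement. Since $G$ is finite, $\mathbb{C}^n$ splits into finitely many smooth $G$-invariant strata $V_{(H)}$ by isotropy type, with the fixed locus $V^G$ the deepest stratum. An invariant function restricted to $V_{(H)}$ descends to the quotient $V_{(H)}/G$, and a critical orbit of $f_\lambda$ in that stratum is a nondegenerate critical point upstairs precisely when the induced function on $V_{(H)}/G$ is Morse there and the transverse Hessian (an $H$-invariant form on the normal directions) is nondegenerate. I would apply the parametric transversality theorem to the jet map $(x,\lambda)\mapsto j^2 f_\lambda(x)$ stratum by stratum: for $\lambda$ outside a set of measure zero the map $x\mapsto j^2 f_\lambda(x)$ is transverse, along the whole stratum at once, to the locus of degenerate critical jets, which forces every critical orbit on that stratum to be nondegenerate. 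As there are only finitely many strata, a common generic $\lambda$ exists, and the isolated--singularity hypothesis ensures the resulting critical set near the origin is discrete.

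The main obstacle is verifying that the invariant deformations are rich enough to achieve transversality to the degenerate stratum at points of largest isotropy, above all along $V^G$, where the deformation cannot break the symmetry. This is exactly the step that fails for general complex actions and that the real hypothesis rescues: the basic invariants restricted to $V^G$ span the space of invariant quadratic forms there, and the distinguished invariant $q$ lets me shift the restricted Hessian along $V^G$ by any multiple of the identity, so a nondegenerate invariant Hessian is always reachable. Confirming that the derivatives $\partial_{\lambda_j}(j^2 f_\lambda)$ surject onto the relevant normal space at each stratum --- using that the basic invariants separate orbits and, via the real structure, generate the needed symmetric directions --- is the technical crux; once this surjectivity is established, the parametric transversality theorem delivers the required $G$-invariant morsification.
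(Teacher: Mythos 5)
You should first be aware that the paper contains no proof of this statement to compare against: it is imported verbatim from Roberts (``The next two theorems are taken from \cite{4}''), so the benchmark is Roberts' own argument, and your sketch is essentially a reconstruction of that standard route --- pass to a real form, stratify by isotropy type, run parametric transversality on $2$-jets, with the real hypothesis guaranteeing that nondegenerate invariant transverse Hessians exist at all. That last mechanism is correctly located and is the right reason the theorem is true: since $q$ is $G$-invariant and nondegenerate, Schur orthogonality gives $q = q|_{V^H} \oplus q|_{W_H}$ for every isotropy subgroup $H$ (the trivial and nontrivial $H$-isotypic components pair to zero), so every normal slice carries a nondegenerate invariant quadratic form and the degenerate invariant Hessians form a proper subvariety of the invariant ones. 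For a non-real action this fails outright: $\mathbb{Z}_3$ acting on $\mathbb{C}$ by $x \mapsto \omega x$ has invariants $g(x^3)$, so every invariant deformation keeps a degenerate critical point at $0$ and no transversality argument could possibly succeed.

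As a proof, however, your text has a genuine hole exactly where you flag one: the surjectivity of the $\lambda$-derivatives of $j^2 f_\lambda$ onto the $G_x$-invariant $2$-jet directions, uniformly for all $x \neq 0$ in a punctured neighbourhood of the origin, \emph{is} the content of the theorem, and the two mechanisms you offer do not establish it. Along $V^G$ the problematic block is the transverse Hessian on $W = (V^G)^{\perp}$ (with respect to $q$), not the restricted Hessian --- on $V^G$ every quadratic form is invariant and freely deformable, so nothing there needs rescuing --- and shifting by multiples of $q$ is a one-parameter move: it makes the Hessian at a \emph{fixed} point nondegenerate for all but finitely many $t$, but the critical points move with $t$, and transversality along a positive-dimensional critical locus cannot be extracted from a single deformation direction. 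What you actually need is that the $2$-jets at $x$ of your bounded-degree invariant monomials span the full space of $G_x$-invariant $2$-jets for every nearby $x \neq 0$, with a degree bound uniform as $x \to 0$ --- a coherence statement that is precisely the technical core of Roberts' paper (or of Bierstone--Field equivariant transversality, which you could invoke rather than reprove). Two smaller glitches: the antilinear map obtained by comparing $q$ and $h$ need not square to a positive operator in the reducible case (a nondegenerate symmetric invariant $q$ forces quaternionic isotypic blocks to occur with even multiplicity, and the real structure is built from that multiplicity argument, not by polar decomposition alone); and Schwarz--Mather is overkill for a finite group, where Noether's theorem already gives polynomial generators of degree at most $|G|$. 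None of this makes the strategy wrong --- it is the right one, and as far as this paper is concerned the cited one --- but as written the crux is asserted rather than proven.
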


The set $C$ of critical points of any invariant deformation $f_{\lambda}$ is $G$-invariant. The action of $G$ on $(\mathbb{C}^n, 0)$ induces a representation  $\theta$ of $G$ on the space of $\mathbb{C}$-valued functions on $C$ ($\theta(g) \circ h(x) = h ( g^{-1} (x))$.

\begin{theorem}If the deformation $f_{\lambda}$ has only nondegenerate critical points then the representations $\theta$ and  $\mu_G(f)$ are isomorphic.
\end{theorem}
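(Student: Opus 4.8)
The plan is to realize both $\mu_G(f)$ and $\theta$ as fibers of a single $G$-equivariant flat family of Jacobian algebras over the deformation parameter, and then to argue that the isomorphism class of the $G$-module carried by a fiber does not change with the parameter. Write $F(x,\lambda)$ for the deformation, with $F(\cdot,0)=f$ and $F(\cdot,\lambda)=f_\lambda$, defined on $U\times\Lambda$, where $U$ is a Milnor ball around the origin and $\Lambda$ a small connected ball around $0$ in parameter space. Because the deformation is $G$-invariant, $G$ acts on $U$, acts trivially on $\Lambda$, and leaves $F$ invariant, so the relative Jacobian ideal $\bigl(\partial F/\partial x_1,\dots,\partial F/\partial x_n\bigr)$ is $G$-stable. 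Let $\mathcal Q=\mathcal O_{U\times\Lambda}/\bigl(\partial F/\partial x_i\bigr)$, and let $\pi\colon\mathrm{Supp}\,\mathcal Q\to\Lambda$ be the map induced by projection; its fiber over $\lambda$ is the Jacobian algebra $Q_{f_\lambda}$, and the fiber over $0$ is $Q_f$.

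First I would establish that $\pi_*\mathcal Q$ is a locally free $\mathcal O_\Lambda$-module, so that $\mathcal Q$ is a flat family. Since $f$ has an isolated singularity, $Q_f$ is finite dimensional, and the conservation-of-multiplicity argument for deformations of isolated hypersurface singularities (after shrinking $U$ and $\Lambda$ appropriately) shows that $\mathrm{Supp}\,\mathcal Q$ is finite over $\Lambda$ and that $\pi_*\mathcal Q$ is $\mathcal O_\Lambda$-free of rank $\mu(f)$; concretely one invokes the preparation theorem together with Nakayama to upgrade the finite dimension of the special fiber to freeness. As $G$ fixes $\lambda$, its action commutes with the $\mathcal O_\Lambda$-module structure, so $\pi_*\mathcal Q$ is a $G$-equivariant vector bundle over the connected base $\Lambda$.

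Next I would show that the fiberwise $G$-representation is constant in $\lambda$. For each $g\in G$ the endomorphism it induces on $\pi_*\mathcal Q$ is $\mathcal O_\Lambda$-linear and varies holomorphically, so the character $\lambda\mapsto\operatorname{tr}\bigl(g\mid Q_{f_\lambda}\bigr)$ is a holomorphic function on $\Lambda$. Since the characters of $G$-representations of a fixed finite dimension form a discrete subset of the character ring, this function is constant on the connected $\Lambda$; and because a complex representation of a finite group is determined by its character, all fibers are isomorphic as $G$-modules. In particular the fiber over $0$, which is $Q_f$ carrying $\mu_G(f)$, is isomorphic as a $G$-module to the fiber $Q_{f_\lambda}$.

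Finally I would identify $Q_{f_\lambda}$ with $\theta$. When $f_\lambda$ has only nondegenerate critical points, the Artinian algebra $Q_{f_\lambda}$ decomposes canonically (by the primary decomposition, which is automatically $G$-equivariant since $G$ permutes the maximal ideals supported on $C$) as the direct sum of its local factors over the critical set $C$; at a Morse point the Hessian is nondegenerate, so each local Jacobian algebra is one-dimensional. Hence $Q_{f_\lambda}\cong\bigoplus_{c\in C}\mathbb C$, the space of $\mathbb C$-valued functions on $C$, and since $G$ acts in both cases by $h\mapsto h\circ g^{-1}$, this fiber is exactly $\theta$. Combining the three steps yields $\theta\cong\mu_G(f)$. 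The main obstacle is the interface between the flatness claim and the local-constancy claim: one must genuinely prove that $\pi_*\mathcal Q$ is \emph{locally free} over all of $\Lambda$ (including the special fiber), not merely that generic fiber dimensions coincide, since it is freeness that licenses the holomorphic dependence of the character; this in turn rests on the conservation of total multiplicity within a Milnor ball, which has to be arranged with a careful choice of $U$ and $\Lambda$.
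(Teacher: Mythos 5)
This theorem is stated in the paper as an imported result of Roberts (cited from \cite{4}) and is given no proof there, so there is no internal argument to compare against; your proposal is, in effect, a reconstruction of the standard proof, and it is sound. The chain you use --- a $G$-equivariant coherent sheaf of relative Jacobian algebras, local freeness over the parameter ball, holomorphy of the character $\lambda \mapsto \operatorname{tr}(g \mid Q_{f_\lambda})$, rigidity of finite-group characters (finitely many possible values, each a sum of $\mu(f)$ roots of unity of order dividing $|G|$), and the $G$-equivariant splitting of the Artinian fiber algebra into one-dimensional local factors at Morse points --- is exactly the ``equivariant conservation of Milnor number'' mechanism underlying Roberts' theorem.

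One point to tighten: the parenthetical claim that the preparation theorem together with Nakayama upgrades finite-dimensionality ``to freeness'' is not right as stated; those tools give finiteness of $\pi_*\mathcal Q$ over $\mathcal O_\Lambda$ and a generating set of size $\mu(f)$, not flatness. You correctly flag this yourself at the end: freeness needs either the conservation-of-multiplicity theorem (constant fiber dimension of a coherent sheaf over a reduced connected base forces local freeness) or, more intrinsically, the observation that the relative critical locus is a complete intersection with zero-dimensional fibers, hence Cohen--Macaulay and finite over the smooth base $\Lambda$, so flat by miracle flatness. With that step filled in as indicated, the argument is complete.
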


The last statement mean in particular that $\nu(f)$ is equal to the number of orbits of critical points in a invariant Morse approximation of $f$. 
\begin{theorem}
If the main theorem is true for group action with no fixed points outside of origin, it is true for all group actions.
\end{theorem}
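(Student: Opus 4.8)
The plan is to put the action in diagonal form and separate the coordinates according to whether the corresponding character is trivial. Write $\mathbb{C}^n = V^G \oplus W$, where $V^G = \{z=0\}$ is the fixed subspace spanned by the coordinates $y=(y_1,\dots,y_k)$ carrying the trivial character, and $W=\{y=0\}$ is spanned by the coordinates $z=(z_1,\dots,z_{n-k})$ whose characters are nontrivial. Since the $y$-coordinates are fixed, the orbit of a point $(y,z)$ has the same cardinality as the orbit of $(0,z)$, so every shortest non-fixed orbit is realized inside $W$; in particular the restricted action $\tau|_W$ is free outside the origin and $l(\tau|_W)=l(\tau)$. Thus $(W,\tau|_W)$ is precisely the kind of fixed-point-free datum to which the hypothesis applies, and it will suffice to manufacture from $f$ an invariant germ on $W$ that has an isolated singularity, no monomials of degree $\le 2$, and Milnor number bounded above by $\mu(f)$.

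The next step is to record the algebraic consequences of invariance near $V^G$. Each derivative $\partial f/\partial z_j$ is equivariant of weight $\chi_{z_j}^{-1}$, a nontrivial character, and an equivariant germ of nontrivial weight must vanish on the fixed locus $V^G$; hence $\partial f/\partial z_j \in (z_1,\dots,z_{n-k})$ for every $j$. Equivalently $f(y,z)=a(y)+g(y,z)$ with $a=f|_{V^G}$ and $g$ of order at least $2$ in $z$. Reducing the Jacobian ideal modulo $(z)$ and using $\partial f/\partial z_j|_{z=0}=0$ gives an isomorphism $Q_f/(z)Q_f \cong Q_a$, so $a$ has an isolated singularity at the origin of $V^G$, inherits the absence of low-degree monomials, and satisfies $\mu(a)\le \mu(f)$.

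With this structure in hand I would carry out the reduction by passing to a Thom–Sebastiani split form: deform $f$ through a $G$-invariant, $\mu$-constant family to a germ $a(y)+b(z)$, where $b$ is an invariant germ on $W$ with no monomials of degree $\le 2$ and an isolated singularity. Since the Milnor number is constant along the deformation and $\mu\bigl(a(y)+b(z)\bigr)=\mu(a)\,\mu(b)$, one obtains $\mu(f)=\mu(a)\,\mu(b)\ge \mu(b)$. The germ $b$ lives on $W$ with the fixed-point-free action $\tau|_W$, for which $l(\tau|_W)=l(\tau)$, so applying the assumed fixed-point-free case to $b$ yields $\mu(b)\ge l(\tau)-1$ and therefore $\mu(f)\ge l(\tau)-1$.

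The hard part will be realizing such a split representative while keeping every hypothesis intact and without decreasing $\mu$. The naive choice $b=f|_W$ does not work: its $z$-gradient may vanish along a positive-dimensional set, exactly where the invariant germs $\partial f/\partial y_i(0,z)$ fail to vanish, so $f|_W$ need not have an isolated singularity whenever $V^G\neq 0$; and these invariants cannot simply be annihilated, since forcing them to vanish would place all of $W$ in the singular locus. The crux is therefore to produce a genuinely split invariant germ in the same $\mu$-constant class — or, alternatively, to set up an induction on $\dim V^G$ that strips one fixed coordinate at a time — while \emph{simultaneously} controlling the isolated-singularity property of the $z$-part and the invariant $l(\tau)$. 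This coupling is the main obstacle; the invariant morsification results quoted above suggest that, at least in the real case, one can sidestep the explicit splitting by organizing the critical points of a morsification by orbit size and isolating the contribution of the fixed locus $V^G$.
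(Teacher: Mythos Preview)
Your setup is correct: decomposing $\mathbb{C}^n=V^G\oplus W$ along trivial versus nontrivial characters, noting $l(\tau|_W)=l(\tau)$, and recognizing that the whole task is to manufacture from $f$ a $G$-invariant germ on $W$ with isolated singularity, zero $2$-jet, and Milnor number at most $\mu(f)$. But the mechanism you propose for doing this --- a $G$-invariant $\mu$-constant deformation of $f$ to a Thom--Sebastiani sum $a(y)+b(z)$ --- is never established, and you yourself flag it as ``the main obstacle.'' There is no reason such a split representative should exist in the $\mu$-constant class in general, and your correct observation that $f|_W$ can fail to have an isolated singularity shows the naive candidate does not work. As written, the argument does not close; the alternatives you float (induction on $\dim V^G$, morsification bookkeeping) are not carried out either.

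The paper sidesteps the difficulty with a much cheaper trick: rather than trying to split $f$ while \emph{preserving} $\mu$, one adds the invariant nondegenerate quadratic form $q(y)=y_1^2+\cdots+y_k^2$ in the fixed variables and sets $\tilde f=f+q$. This is still $G$-invariant, and upper semicontinuity along $f+tq$ gives $\mu(\tilde f)\le\mu(f)$. Since $f$ has zero $2$-jet, the Hessian of $\tilde f$ has rank exactly $k$ with kernel $W$, so the equivariant splitting lemma yields a $G$-equivariant change of coordinates after which $\tilde f = y_1^2+\cdots+y_k^2 + g(z)$ with $g$ a $G$-invariant germ on $W$ having zero $2$-jet. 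Isolatedness of the singularity of $g$ is automatic (because $\tilde f$ has one), and $\mu(g)=\mu(\tilde f)\le\mu(f)$; applying the fixed-point-free case to $g$ finishes. The idea you were missing is that you only need an \emph{upper} bound for $\mu$, not equality --- sacrificing the zero-$2$-jet hypothesis in the fixed directions (where it is irrelevant anyway) buys you the split form for free via the splitting lemma, with no need for any $\mu$-constant deformation.
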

\begin{proof}
Consider the action $ \tau: g \circ (x_1,\ldots ,x_n) \longrightarrow \\ 
\longrightarrow ( x_1,\ldots , x_k, \chi_{k+1}(g) x_{k+1}, \ldots, \chi_n (g) x_n), \chi_{k+1}, \ldots, \chi_n \neq 1$ and an invariant function $f$ with a zero $2-jet$ and an isolated critical point at the origin. 

 The function $\tilde{f} = f + x_1^2 + \ldots + x_k^2$ can be reduced to the form $ x_1^2 + \ldots + x_k^2 + f|_{\{x_1=\ldots=x_k=0\}}$ by an application of the (equivariant version of) splitting lemma. Then $\mu(f) \geq \mu( \tilde{f} )$ and $\mu( \tilde{f} ) = \mu(  f|_{\{x_1=\ldots=x_k=0\}})$. $G$ acts on $\{x_1=\ldots=x_k=0\}$ with no fixed points outside of origin, so if the inequality holds in this case, it holds in general.
\end{proof}

Since $\nu(f)$ is equal to the number of orbits of critical points of an invariant Morse approximation, in the case of a real representation $\tau$ with no fixed points outside of origin we obtain the inequality $\mu(f) \geq (\nu(f) -1) l(\tau) + 1$. We will refer to this formula as \textbf{Roberts' inequality}.

\section{Proof of main theorem}

For a group action $ \tau: g \circ (x_1,\ldots ,x_n) \longrightarrow (\chi_1 (g) x_1,\ldots , \chi_n (g) x_n)$ we will denote $\tau_{\mathbb{R}}$ the action on $\mathbb{C}^{2n}$ defined as  $g \circ (x_1,\ldots ,x_n, y_1, \ldots, y_n) \longrightarrow \\ (\chi_1 (g) x_1,\ldots , \chi_n (g) x_n, \bar{\chi}_1 (g) y_1,\ldots , \bar{\chi}_n (g) y_n)$. 

$\tau_{\mathbb{R}}$ is always a real action with the invariant quadratic form $\Sigma_i x_i y_i$. If $\tau$ has no fixed points outside of origin then neither does $\tau_{\mathbb{R}}$.

For a function $f:(\mathbb{C}^n,0) \longrightarrow (\mathbb{C},0)$ $f \oplus f$ is the function $f(x_1, \ldots, x_n) + f(y_1, \ldots, y_n)$.

 It is easy to see that if $f$ has an isolated singularity at the origin then so does $f \oplus f$, and $\mu(f \oplus f) = (\mu(f))^2$. Furthermore, if $f$ is invariant with respect to an action $\tau$ then $f \oplus f$ is invariant with respect to the action $\tau_{\mathbb{R}}$.

\begin{theorem} If $p_1, \ldots, p_{\mu}$ form a monomial basis for the Jacobian algebra of $f$ then $p_i(x) p_j (y), 1 \leq i,j \leq \mu$ form a monomial basis in the Jacobian algebra of  $f \oplus f$.
\end{theorem}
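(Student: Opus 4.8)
The plan is to show that the Jacobian algebra of $f \oplus f$ factors as a tensor product of the Jacobian algebras of the two copies of $f$, after which the monomial basis claim follows immediately from the standard fact that a tensor product of vector spaces with bases has the obvious product basis.

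First I would compute the Jacobian ideal of $f \oplus f$ explicitly. Writing $F(x,y) = f(x) + f(y)$ with $x = (x_1,\dots,x_n)$ and $y = (y_1,\dots,y_n)$, the partial derivatives separate: $\partial F / \partial x_i = (\partial f / \partial x_i)(x)$ and $\partial F / \partial y_i = (\partial f / \partial y_i)(y)$. Hence the Jacobian ideal $J_{f \oplus f}$ in $\mathbb{C}\{x_1,\dots,x_n,y_1,\dots,y_n\}$ is generated by the $x$-partials of $f$ (as functions of $x$ alone) together with the $y$-partials of $f$ (as functions of $y$ alone). The key algebraic step is then to identify
\[
Q_{f \oplus f} = \mathbb{C}\{x,y\} / J_{f \oplus f} \;\cong\; \bigl(\mathbb{C}\{x\}/J_f\bigr) \otimes_{\mathbb{C}} \bigl(\mathbb{C}\{y\}/J_f\bigr) = Q_f \otimes_{\mathbb{C}} Q_f,
\]
using that the ambient ring of germs in $2n$ variables is (for the purposes of taking the quotient by an ideal generated by a disjoint union of variable-separated generators) the completed tensor product of the two rings in $n$ variables, and that $Q_f$ is finite-dimensional so completions do not intervene.

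Second, granting this isomorphism, if $p_1,\dots,p_\mu$ is a monomial basis of $Q_f$, then the images of $p_i(x)$ form a basis of the first tensor factor and the images of $p_j(y)$ form a basis of the second, so the products $p_i(x)p_j(y)$ form a basis of the tensor product $Q_f \otimes Q_f \cong Q_{f \oplus f}$. Since each $p_i$ is a monomial, each product $p_i(x)p_j(y)$ is a monomial in the variables $x_1,\dots,x_n,y_1,\dots,y_n$, so this is in fact a monomial basis, which is exactly the assertion. As a byproduct this recovers the already-noted dimension count $\mu(f \oplus f) = \mu^2$.

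The step I expect to be the main obstacle is justifying the tensor-product identification rigorously in the analytic (convergent power series) category, since $\mathbb{C}\{x,y\}$ is not literally the algebraic tensor product $\mathbb{C}\{x\} \otimes \mathbb{C}\{y\}$. The cleanest way around this is to argue directly at the level of the monomials rather than invoking an abstract tensor-product isomorphism: I would show that the monomials $p_i(x)p_j(y)$ span $Q_{f\oplus f}$ by reducing an arbitrary monomial $x^\alpha y^\beta$ modulo $J_{f \oplus f}$ — reducing the $x$-part using the $x$-partials and the $y$-part using the $y$-partials independently — and then show linear independence by the standard argument that any nontrivial relation $\sum c_{ij} p_i(x)p_j(y) \in J_{f\oplus f}$ would, upon collecting terms and using that the generators split into an $x$-group and a $y$-group, force a nontrivial relation among the $p_i$ modulo $J_f$, contradicting that they form a basis. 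This separation-of-variables bookkeeping is routine but is where all the real content lies.
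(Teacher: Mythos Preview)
Your proposal is correct, but the paper takes a slightly different and more economical route. Rather than establishing the tensor-product decomposition $Q_{f\oplus f}\cong Q_f\otimes Q_f$ (or proving spanning and linear independence separately), the paper exploits two facts already recorded just before the theorem: that $\dim Q_{f\oplus f}=\mu^2$, and that $Q_{f\oplus f}$ admits \emph{some} monomial basis $q_1,\dots,q_{\mu^2}$. Each such $q_i$ factors as $q_{i1}(x)\,q_{i2}(y)$; since $q_{i1}\notin J_f$ (else $q_i\in J_{f\oplus f}$), one writes $q_{i1}\equiv\sum_j c_{ij}p_j(x)\pmod{J_f}$ and similarly for $q_{i2}$, whence each $q_i$ lies in the span of the $p_j(x)p_k(y)$ modulo $J_{f\oplus f}$. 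Thus the $p_j(x)p_k(y)$ span, and since there are exactly $\mu^2$ of them they form a basis. The advantage of the paper's argument is that it sidesteps entirely the analytic tensor-product issue you correctly flagged and never needs a direct linear-independence check; the advantage of yours is that it is self-contained, yielding $\mu(f\oplus f)=\mu^2$ as a consequence rather than consuming it as an input.
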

\begin{proof} Start with an arbitrary monomial basis ${q_1(x,y), \ldots , q_{\mu^2}(x,y)}$ for $Q_{f \oplus f}$. Each $q_i$ factors as as a product of monomials in $x$ and $y$: $q_i = q_{i1}(x) q_{i2}(y)$. For each $i$ $q_{i1} \notin J_f$, as otherwise $q_i \in J_{f \oplus f}$. Hence we have a decomposition $q_{i1} = \Sigma_{j}c_{i j} p_j (x)$. Similarly, each $q_{i2}$ is a linear combination of $p_j(y)$. Therefore $p_i(x) p_j (y)$ form a system of generators for $Q_{f \oplus f}$. As there are exactly $\mu^2$ of them, they form a basis.

\end{proof}

\begin{theorem}$\nu(f \oplus f) = k_1^2 + \ldots + k_m^2$, with $k_i$ being the multiplicities of different one-dimensional representations in the irreducible decomposition of $Q_{f }$.
\end{theorem}
\begin{proof} As proven in the previous theorem $Q_{f \oplus f}$ is isomorphic ( as a vector space) to the tensor product of  $Q_f$ with itself. On the second factor of this tensor product (i.e. on monomials in $y_i$) $G$ acts by conjugate characters, so the action on the group on the second factor can be identified with the action on the conjugate space $Q^{*}_f$. According to a well-known result in representation theory (see, e.g., \cite{7}, pp. 433--434) for each finite-dimensional representation $V$ the representation $V \otimes V^{*}$is naturally isomorphic to  $Hom(V, V)$, and the invariant subspace of  $V \otimes V^{*}$  is isomorphic to the space of equivariant endomorphisms of $V$. The dimension of the space of equivariant endomorphisms is $k^2_1 + \ldots + k^2_m$ with  $k_i$ being the multiplicities of irreducible representations in the decomposition of $V$.

\end{proof}
 
 Since $\mu(f) = k_1 + \ldots + k_m$, we have, in particular, $\nu(f \oplus f) \geq \mu(f)$.
Now we can conclude the proof of the main theorem. As $\tau_{\mathbb{R}}$ is a real representation, we can use Roberts' inequality for the Milnor number of $f \oplus f$: 

$\mu(f \oplus f) \geq (\nu (f \oplus f) - 1) l( \tau _{\mathbb{R}}) +1$. 

As we now know $\mu(f \oplus f) = \mu^2 (f)$, $\nu(f \oplus f) \geq \mu(f)$ and, obviously, $l( \tau _{\mathbb{R}}) = l(\tau)$. Substituting all of these into Roberts' inequality gives us the inequality 

$\mu^2(f) \geq (\mu (f) - 1) l(\tau) +1$. 

Solving this quadratic inequality for $\mu(f)$, we obtain $\mu(f) =1$(which is impossible given that the 2-jet of $f$ is zero) or $\mu(f) \geq l(\tau) -1$. 

\section{Tightness of the bound}

The next step is to produce examples showing the tightness of this bound. The trivial example is the 1-dimensional representation: if a character $\chi$ of group $G$ has order $k$ and $G$ acts on $\mathbb{C}$ via this character, the invariant function $x^k$ has the lowest possible Milnor number $k-1$. However, this is unsatisfactory, as we would ideally like to prove that the bound is tight in arbitrarily large dimensions. To show this, we present the following examples.

Let $n$ be an odd positive integer, $d_1, \ldots, d_n \geq 2$ -- natural numbers, $g$ -- generator of a group  $\mathbb{Z}_{d_1\ldots d_n + 1}$,  $\epsilon$ -- pimitive root of unity $1$ of degree $d_1\ldots d_n + 1$.
 $\mathbb{Z}_{d_1\ldots d_n + 1}$ acts on $\mathbb{C}^n$ in the following fashion: 
 
 \begin{equation}{\nonumber}
g \circ (x_1, \ldots, x_n) = (\epsilon x_1, \epsilon^{-d_1} x_2, \ldots, \epsilon^{(-1)^{n-2} d_1 \ldots d_{n-2}} x_{n-1}, \epsilon^{(-1)^{n-1} d_1 \ldots d_{n-1}} x_n)
 \end{equation}

 The polynomial $x_1^{d_1} x_2 + x_2^{d_2} x_3 + \ldots + x_{n-1}^{d_{n_1}} x_n + x_n^{d_n} x_1$ is invariant with respect to this action. It has an isolated critical point, as it is an invertible polynomial of the loop type (see., e.g., \cite{8,9}), and its Milnor number is equal to $d_1\ldots d_n$.

The length of any nontrivial orbit in this case is $d_1\ldots d_n  +  1$, so the bound  $\mu \geq l(\tau)-1$ is tight.

\section{Equivariantly stable germs}

There are many papers on the classification of equivariantly simple singularities (see., e.g., \cite{10}, \cite{11}, \cite{12}). Roughly speaking, an \textbf{equivariantly stable} singularity is the simplest of the equivariantly simple singularities, i.e. the one that is only adjacent to itself. If the case of real group action the equivariantly stable singularity is the nondegenerate one, as the deformation of a Morse function is again a Morse function. For the formal definition of equivariant stability see below.

We will denote $\mathfrak{m}_G$ the maximal ideal in the ring of $G$-invariant analytic germs. Since our action has no fixed points outside of origin, every element of $\mathfrak{m}_G$ has a critical point at the origin. Consider $\mathcal{O}^{r}_{G}$ --- the space of $r$-jets of elements in $\mathfrak{m}_G$.  The group $D^r_G$ of $r$-jets of  $G$-equivariant diffeomorphisms  $(\mathbb{C}^n,0) \longrightarrow (\mathbb{C}^n,0)$ acts on this space.

A germ $f:(\mathbb{C}^n,0) \longrightarrow (\mathbb{C},0)$ is \textbf{equivariantly stable} if for all large enough $r$  $D^r_G$-orbit of $r$-jet of $f$ in  $\mathcal{O}^{r}_{G}$ is open.

\begin{theorem}
$f$ is equivariantly stable iff $\nu(f)=1$.

\end{theorem}
\begin{proof}
Let $f$ be equivariantly stable. As the orbit of $r$-jet of $f$ is open for large enough $r$ the tangent space to the orbit has dimension equal to the dimension of $\mathcal{O}^{r}_{G}$ itself. The tangent space to the orbit  at $r$-jet of $f$ is the projection of the jacobian ideal of $f$ to  $\mathcal{O}^{r}_{G}$. But if for all $r$ large enough the projection of $J_f$ covers all  $\mathcal{O}^{r}_{G}$ $J_f$ contains $\mathfrak{m}_G$. Since $f$ has a critical point at the origin  $J_f$ does not contain 1 and $\nu(f)=1$.

Now assume $\nu(f)=1$. The Milnor number of $f$ is finite, as the invariants of a finite group action have the separating property, i.e. for any two orbits of the $G$-action there is an invariant function that is equal to 1 on the first orbit and 0 on the second. But if $\nu(f)=1$, then $J_f = \mathfrak{m}_G$ and any invariant function takes the same value at any point of the singular locus of $f$ as it does at the origin. So the separating property fails if the critical point of $f$ is non-isolated. Since  $f$ has a finite Milnor number we can apply the Slodowy theorem (see \cite{13}), according to which the invariant versal deformation of $f$ is equal to $f + \lambda_1 h_1 + \ldots + \lambda_{\nu} h_{\nu}$, $h_j$ --- linearly independent invariant elements of $Q_f$. Since $\nu(f)=1$, the versal deformation of $f$ is trivial, i.e. has the form $f_{\lambda} = f + \lambda$, and the equivariant stability follows immediately.
\end{proof}
In particular for the nondegenerate singularity $\mu(f)=\nu(f)=1$, as $Q_f$ is equal to $\mathbb{C}$.

\begin{theorem}

If $\mu(f) = l(\tau)-1$ $f$ is equivariantly stable
\end{theorem}

\begin{proof}
If $\mu(f) = l(\tau)-1$, the inequality $\mu^2(f) \geq (\mu (f) - 1) l(\tau) +1$ becomes an equality. Since also  $\mu^2(f) \geq (\nu (f \oplus f) - 1) l(\tau) +1$ and $\nu(f \oplus f) \geq \mu(f)$, it can only become an equality if $\nu(f \oplus f) = \mu(f)$. As was proven earlier, $\nu(f \oplus f) = k_1^2 + \ldots + k_m^2$, with $k_i$ being the multiplicities of one-dimensional representations in $Q_{f }$. With $\mu(f)$ equal to $k_1 + \ldots + k_m$, the equality $\nu(f \oplus f) = \mu(f)$ can only hold if $\forall \; i \; k_i=1$. In particular, the multiplicity of the trivial representation (equal to $\nu(f)$) is 1.
\end{proof}


\begin{thebibliography}{99}

\bibitem{1} M.~E.~Kazarian, ``Characteristic classes of Lagrangian and Legendre singularities'', Russian Math. Surveys, 50:4 (1995), 701--726.
\bibitem{2}V.~A.~Vassiliev, ``On a problem by M. Kazarian'', Funct. Anal. Appl., 33:3 (1999), 220--221.
\bibitem{3}S.~P.~Chulkov, ``On the Milnor Number of an Equivariant Singularity'', Math. Notes, 71:6 (2002), 871--874.
\bibitem{4}M.~Roberts, ``Equivariant Milnor Numbers and Invariant Morse Approximations'', J.L.M.S., 31 (1985), 487--500.
\bibitem{5}H.~Matsumura, P.~Monsky, ``On the automorphisms of hypersurfaces'', J. Math. Kyoto Univ. 3:3 (1963), 347-361.
\bibitem{6} S.~Bochner, ``Compact groups of differentiable transformations'', Ann. of Math., 46:3 (1945), 372--381.

\bibitem{7} C.~B.~Hall, ``Lie Groups, Lie Algebras, and Representations: An Elementary Introduction'', Springer, 2015.

\bibitem{8} W.~Ebeling, S.~M.~Gusein-Zade, ``Saito duality between Burnside rings for invertible polynomials'', Bull. London Math. Soc., 44 (2012), 814--822.
\bibitem{9} M.~Kreuzer, H.~Skarke, ``On the classification of quasihomogeneous functions'', Commun.Math. Phys., 150 (1992), 137--147.

\bibitem{10} E.~A.~Astashov, "Classification of $\mathbb{Z}_3$-Equivariant Simple Function Germs'', Math. Notes, 105:2 (2019), 161--172.
\bibitem{11}S.~M.~Gusein-Zade, A.-M.~Ya.~Rauch, ``On simple $\mathbb{Z}_3$
-invariant function germs'', Funct. Anal. Appl., 55:1 (2021), 45--51.



\bibitem{12} I.~A.~Proskurnin, ``Singularities Equivariantly Simple with Respect to Irreducible Representations'', Funct. Anal. Appl., 57:1 (2023), 60--64.
\bibitem{13} P.~Slodowy, ``Einige Bemerkungen zur Entfaltung symmetrischer Funktionen'', Math.~Z., 158:2 (1978), 157--170.

\end{thebibliography}
\end{document}